 \newtheorem{theorem}{Theorem}[section]
 \newtheorem{corollary}[theorem]{Corollary}
 \newtheorem{proposition}[theorem]{Proposition}
\newtheorem{observation}[theorem]{Observation}
\theoremstyle{definition}
\theoremstyle{remark}
\newtheorem{fact*}{Fact}
\newtheorem{note}[theorem]{Note}
\DeclareMathOperator{\vspan}{span}
\DeclareMathOperator{\supp}{supp}
\newcommand{\hilbert}{\mathcal{H}}
\newcommand{\C}{\mathbb{C}}
\newcommand{\BH}{\mathcal{B}(\mathcal{H})}
\newcommand{\cc}[1]{\overline{#1}}
\newcommand{\norm}[1]{\left\Vert#1\right\Vert}
\newcommand{\ran}[1]{\operatorname{ran}#1}
\newcommand{\ad}{^\ast}
\newcommand{\inv}{^{-1}}
\newcommand{\til}{\raise.17ex\hbox{$\scriptstyle\mathtt{\sim}$}}
\newcommand{\ph}{\varphi}
\newcommand\la{\lambda}
\newcommand\beq{\begin{equation}}
\newcommand\eeq{\end{equation}}
\newcommand\bbm{\begin{bmatrix}}
\newcommand\ebm{\end{bmatrix}}
\newcommand{\bpm}{\left( \begin{smallmatrix}}
\newcommand{\epm}{\end{smallmatrix} \right)}
\numberwithin{equation}{section}
\newlength{\Mheight}
\newlength{\cwidth}
\newcommand{\dfn}[1]{{\bf #1}\index{#1}}
\newcommand{\Mn}{M_n(\C)}
\title[Nonlinear Stinespring]{Induced Stinespring factorization and the Wittstock support theorem}
\author[J. E. Pascoe]{
J. E. Pascoe$^\dagger$
}
\address{Department of Mathematics\\
1400 Stadium Rd\\
  University of Florida\\
 Gainesville, FL 32611}
\email[J. E. Pascoe]{pascoej@ufl.edu}
\thanks{$\dagger$ Partially supported by National Science Foundation DMS Analysis Grant 1953963}
\author[R. Tully-Doyle]{
Ryan Tully-Doyle$^\ddagger$
}
\address{Mathematics Department \\
1 Grand Ave \\
Cal Poly, SLO\\
San Luis Obispo, CA 93407}
\email[R. Tully-Doyle]{rtullydo@calpoly.edu}
\thanks{$\ddagger$ Partially supported by National Science Foundation DMS Analysis Grant 2055098}
\date{\today}
\subjclass[2020]{Primary 46L07 Secondary 46L52, 47A57, 47A48}
\keywords{Stinespring factorization theorem, Wittstock decomposition, transfer function realizations}
\begin{document}

\maketitle

\begin{abstract}
Given a pair of self-adjoint-preserving completely bounded maps on the same $C^*$-algebra, say that $\varphi \leq \psi$ if the kernel of $\varphi$ is a subset of the kernel of $\psi$ and $\psi \circ \varphi^{-1}$ is completely positive.
The \emph{Agler class} of a map $\varphi$ is the class of $\psi \geq \varphi.$
Such maps admit colligation formulae, and, in Lyapunov type situations, transfer function type realizations on the Stinespring coefficients of their Wittstock decompositions.
As an application, we prove that the support of an extremal Wittstock decomposition is unique.
\end{abstract}

\section{Introduction}


First, we recall the following theorem.
\begin{theorem}[Nevanlinna-Pick interpolation theorem] \label{oldnevpick}
	Let $z_1,\ldots, z_n \in \mathbb{D}$ and $\la_1,\ldots, \la_n \in \mathbb{C}.$
	There is an analytic function $f: \mathbb{D} \rightarrow \cc{\mathbb{D}}$
	such that $f(z_i)=\la_i$ if and only if the matrix $\left[\frac{1-\cc{\la_i}\la_j}{1-\cc{z_i}z_j}\right]_{i,j}$
	is positive semidefinite.
\end{theorem}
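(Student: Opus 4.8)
The plan is to read the positive semidefiniteness of the Pick matrix $P=\left[\tfrac{1-\cc{\la_i}\la_j}{1-\cc{z_i}z_j}\right]_{i,j}$ as a Gram-matrix identity that conceals a \emph{lurking isometry}, and then to move back and forth between that isometry and the interpolant $f$ by means of a transfer-function realization. Throughout, $k_w(z)=\tfrac{1}{1-\cc w z}$ denotes the reproducing kernel of the Hardy space $H^2(\D)$.

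\emph{Necessity.} If $f\colon\D\to\cc\D$ is analytic with $f(z_i)=\la_i$, then multiplication by $f$ on $H^2(\D)$ is a contraction, so $I-M_fM_f\ad\ge 0$. Since $M_f\ad k_w=\cc{f(w)}\,k_w$, a one-line computation gives $\langle (I-M_fM_f\ad)k_{z_i},k_{z_j}\rangle=\tfrac{1-\cc{\la_i}\la_j}{1-\cc{z_i}z_j}$, so $P$ is exactly the Gram matrix of $k_{z_1},\dots,k_{z_n}$ for the positive semidefinite form $\langle(I-M_fM_f\ad)\,\cdot\,,\,\cdot\,\rangle$, and hence $P\ge 0$.

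\emph{Sufficiency.} Assume $P\ge 0$ and write it as a Gram matrix, $\tfrac{1-\cc{\la_i}\la_j}{1-\cc{z_i}z_j}=\langle v_j,v_i\rangle_\h$ for vectors $v_1,\dots,v_n$ in a Hilbert space $\h$. Clearing the denominator and regrouping turns this into the single relation $\langle v_j\oplus\la_j,\ v_i\oplus\la_i\rangle_{\h\oplus\C}=\langle (z_jv_j)\oplus 1,\ (z_iv_i)\oplus 1\rangle_{\h\oplus\C}$, which says that the assignment $(z_iv_i)\oplus 1\mapsto v_i\oplus\la_i$ extends by linearity to a well-defined isometry between the two corresponding subspaces of $\h\oplus\C$. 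Extend this isometry --- harmlessly enlarging $\h$ if necessary --- to a unitary $\bpm A & B\\ C & D\epm$ on $\h\oplus\C$, and set $f(z)=D+Cz(I-zA)\inv B$. The standard transfer-function estimate for a unitary colligation shows that $f$ is analytic on $\D$ with $\norm f_\infty\le 1$; and reading off the two rows of the unitary applied to $(z_iv_i)\oplus 1$ gives $A(z_iv_i)+B=v_i$ and $C(z_iv_i)+D=\la_i$, whence $v_i=(I-z_iA)\inv B$ and then $f(z_i)=D+Cz_i(I-z_iA)\inv B=\la_i$.

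The argument is short, and the only point requiring real care is the passage from $P\ge 0$ to a well-defined isometry admitting a unitary extension: positivity is precisely what supplies the Gram factorization, the Gram identity is precisely what forces the map to be well-defined on all of the relevant subspace (and not merely on the spanning vectors), and the unitary extension is then automatic once the defect indices are matched. A classical alternative is the Schur recursion --- deduce $|\la_1|\le 1$ from $P\ge 0$, divide out the Blaschke factor based at $z_1$ to produce an interpolation problem at $z_2,\dots,z_n$, verify that the resulting Pick matrix is again positive semidefinite, and induct on $n$ --- but I expect the Schur-complement bookkeeping in that reduction to be the genuine obstacle, which is why I favor the lurking-isometry route above.
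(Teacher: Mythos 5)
Your proof is correct, and it is the same lurking-isometry/colligation philosophy the paper is built around, but executed in a different register. You work concretely in $H^2(\mathbb{D})$: necessity comes from $I-M_fM_f\ad\geq 0$ tested against reproducing kernels, and sufficiency from a Gram factorization of the Pick matrix, the isometry $(z_iv_i)\oplus 1\mapsto v_i\oplus\la_i$, a unitary extension, and the Schur-class transfer function $f(z)=D+Cz(I-zA)\inv B$. The paper instead treats Theorem \ref{oldnevpick} as a corollary of its Lyapunov formulation (Theorem \ref{lyaclassic}): it recasts the hypothesis as complete positivity of $\Lambda_{XY}=L_Y\circ L_X\inv$, replaces your Gram vectors $v_i$ by the Stinespring operator $\Gamma$ in $\Lambda_{XY}(H)=\Gamma\ad\pi(H)\Gamma$, and runs the lurking-isometry argument at the level of the identity $H+\Gamma\ad\pi(H)\Gamma=Y\ad HY+\Gamma\ad\pi(X\ad HX)\Gamma$ evaluated at $H=W\ad V$; the diagonal-matrix computation in Subsection \ref{techdemo} then shows the Choi matrix of $\Lambda_{XY}$ is exactly the classical Pick matrix. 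What your version buys is a short, self-contained classical proof including the necessity direction (which the paper does not spell out); what the paper's version buys is that Stinespring factorization replaces the ad hoc Gram factorization, so the identical argument survives passage to noncommutative, multivariable, and operator-algebraic settings where there is no reproducing kernel to lean on. Two small points of care in your write-up that are fine but worth stating: the unitary extension exists because the initial and final subspaces of the isometry have equal finite dimension inside $\mathcal{H}\oplus\mathbb{C}$ (so enlarging $\mathcal{H}$ is indeed harmless), and $I-z A$ is invertible for $z\in\mathbb{D}$ because $A$, as a corner of a unitary, is a contraction.
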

See \cite{agmcbook} for a comprehensive reference on Pick interpolation. Importantly, the elementary so-called ``lurking isometry argument" gives that such an $f$ is of the form
	$$f(z) = a +b^*z (1-Dz)^{-1}c$$
where 
	$$\bbm a & b^* \\ c & D\ebm$$
is a unitary operator. Such a formula is often called a transfer function realization.

Let $\mathcal{M}, \mathcal{N}$ be $C^*$-algebras. Given map $\varphi:\mathcal{M}\rightarrow \mathcal{N}$
we define the $n$-th induced map $\varphi^{(n)}: M_n(\mathcal{M}) \rightarrow M_n(\mathcal{M})$
by entrywise evaluation as $$\varphi^{(n)}([m_{ij}]_{ij})=[\varphi(m_ij)]_{ij}.$$
We say that $\varphi$ is \dfn{completely bounded} if each $\varphi^{(n)}$ is a bounded linear map. We call $\varphi$ \dfn{real} if $\varphi(H)^*=\varphi(H^*).$
We say that $\varphi$ is \dfn{completely positive} if each $\varphi^{(n)}$ takes positive semidefinite elements to positive semidefinite elements.

Let $X \in M_n(\mathbb{C}).$
Define the Lyapunov map $L_X(H) = H- X^*HX.$

\begin{theorem}[Nevanlinna-Pick interpolation theorem: Lyapunov formulation]\label{lyaclassic}
	Let $Z \in M_n(\mathbb{C})$ be a strict contraction and $\Lambda \in M_n(\mathbb{C}).$
	There is an analytic function $f: \mathbb{D} \rightarrow \cc{\mathbb{D}}$
	such that $f(Z)=\Lambda$ if and only if $L_\Lambda \circ L_Z^{-1}$ is a completely positive map.
\end{theorem}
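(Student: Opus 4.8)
The plan is to reduce the statement to the classical Nevanlinna–Pick theorem (Theorem \ref{oldnevpick}) by diagonalizing $Z$ against a test functional and recognizing the Pick matrix as the matrix of the completely positive map $L_\Lambda \circ L_Z^{-1}$ in a suitable basis. First I would dispose of the forward direction: if $f:\mathbb{D}\to\cc{\mathbb{D}}$ is analytic with $f(Z)=\Lambda$, then for the contraction $Z$ one has the standard de Branges–Rovnyak/Sz.-Nagy–Foias identity $L_\Lambda(H) = f(Z)^* H f(Z) - H + L_Z\big((\text{something completely positive in }H)\big)$; more efficiently, apply the functional calculus to the positive kernel $(1-f(z)^*f(w))/(1-\bar z w)$ to obtain that $H\mapsto L_\Lambda\circ L_Z^{-1}(H)$ is a sum of maps of the form $C^* H C$ with $C$'s built from $f(Z)$ and resolvents of $Z$, hence completely positive. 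Concretely, since $L_Z$ is invertible ($Z$ a strict contraction), $L_Z^{-1}(H) = \sum_{k\ge 0} (Z^*)^k H Z^k$, and $L_\Lambda(L_Z^{-1}(H)) = \sum_k (Z^*)^k H Z^k - \sum_k \Lambda^*(Z^*)^k H Z^k \Lambda$; using $f(Z)=\Lambda$ and that $f$ is a Schur-class function, the Agler decomposition $1 - f(z)^*f(w) = (1-\bar z w) u(z)^* u(w)$ for some analytic $u$ into a Hilbert space gives, after substituting $z,w\mapsto Z$ in the two-variable functional calculus, exactly $L_\Lambda\circ L_Z^{-1}(H) = u(Z)^* H\, u(Z)$-type expression, visibly completely positive.

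For the converse, suppose $L_\Lambda\circ L_Z^{-1}$ is completely positive. I would pass to a scalar model: pick eigenvectors, or more robustly, use that complete positivity of a map $\Phi$ on $M_n(\mathbb{C})$ is equivalent to positivity of the Choi matrix $[\Phi(E_{ij})]_{ij}$. The key computation is to identify $[\,(L_\Lambda\circ L_Z^{-1})(E_{ij})\,]_{ij}$, or rather a compression of it, with the classical Pick matrix $\big[\frac{1-\bar\lambda_i\lambda_j}{1-\bar z_i z_j}\big]$ once one reduces to the diagonal case. So the actual first step in the converse is reduction to $Z$ diagonal: conjugating $Z$ by a unitary $U$ replaces $(Z,\Lambda)$ by $(U^*ZU, U^*\Lambda U)$ and conjugates $L_\Lambda\circ L_Z^{-1}$ by $\mathrm{Ad}_U$, preserving complete positivity and the existence of an interpolating $f$; a further limiting/perturbation argument reduces $Z$ to having distinct eigenvalues $z_1,\dots,z_n$ in $\mathbb{D}$ (perturb $Z$, solve, take a normal-families limit of the resulting Schur functions). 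With $Z=\diag(z_1,\dots,z_n)$, one computes $L_Z^{-1}(H) = \big[\frac{h_{ij}}{1-\bar z_i z_j}\big]$, so $L_\Lambda\circ L_Z^{-1}(H) = \big[\frac{h_{ij}}{1-\bar z_i z_j}\big] - \Lambda^*\big[\frac{h_{ij}}{1-\bar z_i z_j}\big]\Lambda$. Testing complete positivity on the rank-one positive elements $H = vv^*$ and tracking the Schur-product structure, the Choi matrix of $L_\Lambda\circ L_Z^{-1}$ is (a Schur complement of) the matrix $\big[\frac{1-\bar\lambda_i\lambda_j}{1-\bar z_i z_j}\big]$ where $\lambda_i$ are the diagonal entries of $\Lambda$ relative to the eigenbasis — wait, $\Lambda$ need not be diagonal, so here one must be careful: the correct statement is that complete positivity forces $\Lambda$ to be diagonal in the eigenbasis of $Z$ (this is where the ``kernel containment / intertwining'' rigidity enters) and then the Pick matrix is exactly the classical one.

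The main obstacle I anticipate is precisely this last rigidity point: showing that complete positivity of $L_\Lambda\circ L_Z^{-1}$ forces $\Lambda$ to commute appropriately with $Z$ (so that the problem genuinely reduces to scalar interpolation data $(z_i,\lambda_i)$), and handling the case of repeated eigenvalues of $Z$ where derivative (Carathéodory–Fejér) interpolation conditions appear. I would address the commutation by testing complete positivity against carefully chosen elementary matrices $E_{ij}$ to extract off-diagonal constraints, and address repeated eigenvalues either by the perturbation argument sketched above or by noting that $f(Z)=\Lambda$ for non-diagonalizable $Z$ already entails $\Lambda$ lies in the bicommutant of $Z$, making the reduction uniform. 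Once the scalar data is extracted, the positivity of $\big[\frac{1-\bar\lambda_i\lambda_j}{1-\bar z_i z_j}\big]$ is equivalent to complete positivity of the map by the Choi-matrix identification, and Theorem \ref{oldnevpick} yields the interpolating $f$; conversely that same $f$ plugged into the functional calculus gives $f(Z)=\Lambda$, closing the equivalence.
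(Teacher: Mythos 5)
Your forward direction is fine: the Agler/de Branges--Rovnyak factorization $\frac{1-\overline{f(z)}f(w)}{1-\bar z w}=u(z)^*u(w)$ pushed through the hereditary functional calculus does exhibit $L_\Lambda\circ L_Z^{-1}$ as $H\mapsto \sum_m u_m(Z)^*Hu_m(Z)$, which is completely positive. The problem is the converse, and it sits exactly where you flagged it. Your plan is to diagonalize $Z$, identify the map with the Schur multiplier by the classical Pick matrix, and invoke Theorem \ref{oldnevpick}. But that reduction needs (i) a proof that complete positivity of $L_\Lambda\circ L_Z^{-1}$ forces $\Lambda$ into the bicommutant of $Z$ (your ``test against elementary matrices'' is not carried out, and your alternative --- ``$f(Z)=\Lambda$ already entails $\Lambda$ lies in the bicommutant'' --- is circular, since in this direction $f$ is precisely what you do not yet have); and (ii) a treatment of repeated eigenvalues. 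Your perturbation scheme for (ii) does not close: to ``perturb $Z$, solve, and take a normal-families limit'' you must know that $L_{\Lambda'}\circ L_{Z'}^{-1}$ is still completely positive for the perturbed data, and complete positivity is not stable under perturbing $Z$ alone, nor is it clear how to co-perturb $\Lambda$. Moreover, even granting (i) and (ii), the scalar theorem only returns an $f$ with $f(z_i)=\lambda_i$; when $Z$ has nontrivial Jordan structure the condition $f(Z)=\Lambda$ imposes Carath\'eodory--Fej\'er derivative constraints that Theorem \ref{oldnevpick} does not see, so the reduction target is too weak.

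The paper's proof (Subsection \ref{techdemo}) avoids all of this by going in the opposite direction: it does not reduce to the scalar theorem but rather constructs $f$ directly. One writes the Stinespring factorization $L_\Lambda\circ L_Z^{-1}(H)=\Gamma^*(I\otimes H)\Gamma$ (every representation of $M_n(\C)$ being a multiple of the identity representation), substitutes $H=W^*V$ into the identity $H-\Lambda^*H\Lambda=\Gamma^*(I\otimes(H-Z^*HZ))\Gamma$, and runs the lurking isometry argument to produce a partial isometry $\bbm A&B\\C&D\ebm$ and the transfer function realization $f(z)=A+Bz(1-Dz)^{-1}C$, a Schur function with $f(Z)=\Lambda$ by the intertwining relations. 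In that approach the fact that $\Lambda$ is a ``function of $Z$'' is an output of the construction rather than a rigidity statement you must establish in advance. I'd recommend replacing your converse with that argument, or else supplying genuine proofs of the commutation rigidity and the Jordan-block case.
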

We give a proof of the above theorem which demonstrates our technique in Subsection \ref{techdemo}.

Some work shows that if $Z$ is a matrix with $z_i$ on the diagonal and $\Lambda$ with $\la_i$ on the diagonal,
the corresponding Choi matrix is exactly the matrix arising in the classical Nevanlinna-Pick interpolation theorem.
The formulation of the problem in terms of complete positivity of some induced map is a powerful idea which has led to broad generalizations
in noncommuting variables, especially in terms of the work of Ball-Groenwald-Malakorn \cite{bgm1,bgm2} and subsequent developments \cite{bmv1, bmv2, bmv3, pascoeinv}.

We seek to understand the lurking isometry argument and consequent transfer function realization type objects as fundamental rather than coincidental. That is, we analyze the relationship between two real completely bounded maps $\varphi$ and $\psi$ such that
$\psi \circ \varphi^{-1}$ is completely positive. This framework captures much of operator theoretic interpolation theory arising from generalizations of Nevanlinna-Pick interpolation. The Stinesping theorem  on factorization of completely positive maps envelops some of the core ideas of operator theory as special cases, including the GNS construction, Choi's theorem, and the Sz.-Nagy dilation theorem \cite{paulsen, pisier}. The induced Stinespring type theorem likewise envelops and extends Nevanlinna-Pick type interpolation theorems and related realization machinery.

\section{Stinespring factorization, Wittstock decompostion, and support}

The \dfn{Stinespring factorization theorem} \cite{stinespring} states that any completely positive map $\varphi: \mathcal{M}\rightarrow \BH$
is of the form 
	$$\varphi(H)=V^*\pi(H)V$$
where $\pi:\mathcal{M}\rightarrow \mathcal{B}(\mathcal{L})$ is a representation of $\mathcal{M}$ on a Hilbert space $\mathcal{L}$
and $V:\mathcal{H} \rightarrow \mathcal{L}$ is a bounded linear operator. We say two representations are \dfn{equivalent} if they are unitarialy similar. We say $\pi_1$ is a \dfn{subrepresentation} of $\pi_2$ if
$\pi_2$ is equivalent to representation of the form $\pi_1 \oplus \pi_3.$ 

Given $\pi_1$ and $\pi_2$ representations, we say $\pi_1$ and $\pi_2$ are \dfn{totally orthogonal}, denoted $\pi_1 \perp \pi_2,$ if there does not exist
a subrepresentation of $\pi_1$ which is equivalent to a subrepresentation of $\pi_2.$
The notion of total orthogonality gives a generalization of Schur's lemma which holds even in the absence of a notion of irreducibility.
\begin{observation}
Let $\pi_1$ and $\pi_2$ be representations of some $C^*$-algebra $\mathcal{M}.$
If $\pi_1 \perp \pi_2$ and $A$ is an operator such that $\pi_1(m)A = A\pi_2(m)$ for all $m \in \mathcal{M}$
then $A=0.$
\end{observation}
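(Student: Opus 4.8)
The plan is to exploit the operator $A$ to manufacture a subrepresentation of $\pi_1$ equivalent to one of $\pi_2$, contradicting $\pi_1 \perp \pi_2$ unless $A = 0$. The natural object to consider is the polar decomposition $A = U|A|$, where $|A| = (A^*A)^{1/2}$ acts on $\mathcal{H}_2$ (the space of $\pi_2$) and $U$ is the partial isometry from $\overline{\ran |A|}$ onto $\overline{\ran A}$. First I would show that the intertwining relation $\pi_1(m) A = A \pi_2(m)$ forces $A^* A$ to commute with $\pi_2(m)$ for every $m$: indeed $A^*A\,\pi_2(m) = A^*\pi_1(m)A = (\,\pi_1(m^*)A\,)^*A = (A\pi_2(m^*))^*A = \pi_2(m)A^*A$. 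Hence $|A|$ commutes with $\pi_2$, so the spectral projections of $|A|$ are in the commutant; in particular the projection $P$ onto $\overline{\ran |A|} = \overline{\ran A^*}$ reduces $\pi_2$, and $\pi_2|_{\ran P}$ is a subrepresentation of $\pi_2$.

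Next I would check that conjugating by the partial isometry $U$ carries this subrepresentation of $\pi_2$ onto a subrepresentation of $\pi_1$. From $\pi_1(m)A = A\pi_2(m)$ and the fact that $|A|$ commutes with $\pi_2$, one gets $\pi_1(m) U |A| = U|A|\pi_2(m) = U\pi_2(m)|A|$, and since $|A|$ has dense range in $\ran P$ this yields $\pi_1(m) U = U \pi_2(m)$ on $\ran P$. Thus $Q := UU^*$ is the projection onto $\overline{\ran A}$, it reduces $\pi_1$, and $U$ implements a unitary equivalence between $\pi_2|_{\ran P}$ and $\pi_1|_{\ran Q}$. (One should verify $Q$ reduces $\pi_1$: from $\pi_1(m)U = U\pi_2(m)$ we get $\pi_1(m)UU^* = U\pi_2(m)U^*$, and taking adjoints together with the analogous identity for $m^*$ shows $\pi_1(m)Q = Q\pi_1(m)$.) Since $\pi_1 \perp \pi_2$, there is no nonzero subrepresentation of $\pi_1$ equivalent to a subrepresentation of $\pi_2$, so we must have $P = 0$, i.e. $A^*A = 0$, i.e. $A = 0$.

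The one subtlety — and the step I would be most careful about — is the passage from a relation that holds on the range of $|A|$ to one that holds on its \emph{closure} $\ran P$, and the corresponding density argument identifying $\overline{\ran U|A|} = \overline{\ran A}$ with a reducing subspace; this requires knowing each $\pi_i(m)$ is bounded (true, since representations of $C^*$-algebras are contractive) so that the intertwining identities extend by continuity. Everything else is the standard polar-decomposition-meets-intertwiner computation. I expect no genuine obstacle; the result is essentially the $C^*$-algebraic form of Schur's lemma, with ``no common subrepresentation'' playing the role of ``no common irreducible summand.''
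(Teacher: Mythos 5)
Your argument is correct, and it is exactly the standard polar-decomposition intertwiner argument one would give here; the paper itself leaves this statement as an unproved ``Observation,'' so there is no internal proof to compare against. Every step checks out: the computation $A^*A\,\pi_2(m) = \pi_2(m)\,A^*A$ is right, so $|A|$ and hence its support projection $P$ (onto $\overline{\ran|A|} = (\ker A)^\perp = \overline{\ran A^*}$) lie in $\pi_2(\mathcal M)'$; the relation $\pi_1(m)U|A| = U\pi_2(m)|A|$ extends by boundedness and density to $\pi_1(m)U = U\pi_2(m)$ on $\ran P$ (and trivially on $(\ran P)^\perp$, where $U$ vanishes and $\pi_2$ preserves the complement); adjoints give $U^*\pi_1(m) = \pi_2(m)U^*$, so $Q = UU^*$ reduces $\pi_1$; and $U\vert_{\ran P}$ is a unitary equivalence $\pi_2\vert_{\ran P} \cong \pi_1\vert_{\ran Q}$. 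Total orthogonality then forces $P = 0$, hence $|A| = 0$, hence $A = 0$. The one thing worth stating explicitly, which you gesture at, is that $P$ commutes with $\pi_2$ because the range projection of a positive operator lies in the von Neumann algebra it generates, which sits inside $\pi_2(\mathcal M)'$ since the commutant is a von Neumann algebra. No gap.
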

We say $\supp \pi_1 \subseteq  \supp \pi_2$ 
if there does not exist a subrepresentation of $\pi_1$ which is totally orthogonal to $\pi_2.$ We say $\supp \pi_1 = \supp \pi_2$
if $\supp \pi_1 \subseteq \supp \pi_2$ and $\supp \pi_2 \subseteq \supp \pi_1.$ We call the symbol $\supp \pi$ the \dfn{support} of $\pi.$
If $\varphi$ is a completely positive map, we define the $\supp \varphi$ to be the support of the corresponding representation 
in its minimal Stinespring factorization.

The \dfn{Wittstock decomposition theorem} \cite{wittstock, paulsen82} states that any completely bounded map is in the span of the completely positive maps.

Thus, any real completely bounded map $\varphi$ can be decomposed as
	$$\varphi = \varphi^+ - \varphi^-.$$
We call such a Wittstock decomposition \dfn{extremal} if there does not exist a completely positive $\delta$
such that $\varphi^+-\delta$ and $\varphi^--\delta$ are completely positive.
An extremal Wittstock decomposition is similar to the Hahn-Jordan decomposition of measures, though an extremal Wittstock decomposition is not always unique.

However, we prove that the support of a Wittstock decomposition is unique in the following sense.
\begin{theorem}[Wittstock support theorem]\label{Wittstocksupport}
	Let $\mathcal{M}$ be a $C^*$-algebra.
	Suppose $\varphi:\mathcal{M}\rightarrow \BH$ be a real completely bounded map.
	Given two extremal Wittstock decompositions $$\varphi = \varphi^+-\varphi^- = \phi^+-\phi^-,$$
	we have that
		$$\supp \varphi^+ = \supp \phi^+, \supp \varphi^- = \supp \phi^-.$$
\end{theorem}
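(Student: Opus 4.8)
The plan is to derive the theorem from a one-sided comparison lemma: if $\varphi = \varphi^+ - \varphi^-$ is extremal and $\varphi = \psi^+ - \psi^-$ is \emph{any} Wittstock decomposition of the same map, then $\supp \varphi^+ \subseteq \supp \psi^+$ and $\supp \varphi^- \subseteq \supp \psi^-$. Granting this, I apply it with each of the two given extremal decompositions playing in turn the role of the extremal one and of the arbitrary one; this gives $\supp \varphi^+ \subseteq \supp \phi^+ \subseteq \supp \varphi^+$ and $\supp \varphi^- \subseteq \supp \phi^- \subseteq \supp \varphi^-$, which is exactly the claim.

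To prove the lemma I would set $\Psi := \varphi^+ + \psi^- = \psi^+ + \varphi^-$, a completely positive map, and take its minimal Stinespring factorization $(W,\pi)$ on a Hilbert space $\mathcal{L}$. Since $\varphi^+ \le \Psi$ and $\psi^+ \le \Psi$ (the differences being the completely positive maps $\psi^-$ and $\varphi^-$), the Arveson--Radon--Nikodym theorem yields $T,S \in \pi(\M)'$ with $0 \le T,S \le I$, $\varphi^+(\cdot) = W^* T \pi(\cdot) W$ and $\psi^+(\cdot) = W^* S \pi(\cdot) W$; additivity of the Radon--Nikodym derivative then forces $\psi^-(\cdot) = W^* (I-T)\pi(\cdot)W$ and $\varphi^-(\cdot) = W^*(I-S)\pi(\cdot)W$. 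Rewriting $\varphi^+(\cdot) = (T^{1/2}W)^* \pi(\cdot)(T^{1/2}W)$ and compressing $\pi$ to the cyclic subspace $\overline{\pi(\M)T^{1/2}W\hilbert}$, which is exactly $E_T\mathcal{L}$ for $E_T$ the range projection of $T$, identifies the minimal Stinespring representation of $\varphi^+$ with $\pi|_{E_T\mathcal{L}}$; hence $\supp\varphi^+ = \supp(\pi|_{E_T\mathcal{L}})$, and similarly $\supp\psi^+ = \supp(\pi|_{E_S\mathcal{L}})$.

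Now suppose $\supp\varphi^+ \not\subseteq \supp\psi^+$. Then some subrepresentation of $\pi|_{E_T\mathcal{L}}$ --- equivalent to $\pi|_{E'\mathcal{L}}$ for a nonzero projection $E' \le E_T$ in $\pi(\M)'$ --- is totally orthogonal to $\pi|_{E_S\mathcal{L}}$. Every intertwiner of these two restrictions has the form $E' X E_S$ with $X \in \pi(\M)'$, so the generalized Schur-type observation above forces $E' X E_S = 0$ for all such $X$; thus $E'$ is orthogonal to the central cover $z(E_S)$ of $E_S$ in $\pi(\M)'$ (i.e. to $\overline{\pi(\M)'E_S\mathcal{L}}$), so $E' \le I - z(E_S)$. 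Therefore $c := z(E_T)\bigl(I - z(E_S)\bigr)$ is a nonzero central projection of $\pi(\M)'$ with $cS = 0$ and $cT \ne 0$. Put $R := cT \in \pi(\M)'$: then $0 \le R \le T$, and $R \le c \le I-S$ (the last step because $cS=0$ gives $S \le I-c$), and $R \ne 0$. Hence $\delta(\cdot) := W^* R\,\pi(\cdot)W$ is a nonzero completely positive map with $\varphi^+ - \delta = W^* (T-R)\pi(\cdot)W$ and $\varphi^- - \delta = W^*\bigl((I-S)-R\bigr)\pi(\cdot)W$ both completely positive, contradicting extremality of $\varphi = \varphi^+ - \varphi^-$. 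So $\supp\varphi^+ \subseteq \supp\psi^+$, and the same argument applied to $-\varphi = \varphi^- - \varphi^+ = \psi^- - \psi^+$ gives $\supp\varphi^- \subseteq \supp\psi^-$, proving the lemma.

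I expect the main obstacle to be the translation dictionary rather than the final estimate: one must check with care that the minimal Stinespring representation of $\varphi^+$ is the restriction of $\pi$ to the range projection of its Radon--Nikodym derivative, and that total orthogonality of two such restrictions is --- via the generalized Schur observation --- equivalent to the vanishing $E' X E_S = 0$ and hence to a statement about central covers in $\pi(\M)'$. Once this is set up, the decisive step, producing the witness $\delta$ against extremality through the choice $R = cT$, is short; one should also verify the standard order and additivity properties of the Arveson--Radon--Nikodym derivative used above.
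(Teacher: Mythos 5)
Your proof is correct, but it follows a genuinely different route than the paper's. The paper's argument runs through its colligation machinery: it applies Theorem \ref{concretecoll} with $\Gamma = I$ to the two decompositions viewed as $\varphi \preceq \varphi$, decomposes each $\pi_i$ into a piece with support in $\supp \pi_1$ and a totally orthogonal piece, invokes Observation \ref{factoralong} to factor the colligation operator along this decomposition, and reads off from the ``tilde'' block that $-\tilde{\Psi}_-^*\tilde{\pi}_2\tilde{\Psi}_- = \tilde{\Phi}_+^*\tilde{\pi}_3\tilde{\Phi}_+ - \tilde{\Phi}_-^*\tilde{\pi}_4\tilde{\Phi}_-$; the middle term, if nonzero, is then the CP witness against extremality. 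Your argument instead lives entirely in the commutant $\pi(\M)'$ of a single ambient minimal Stinespring dilation of $\Psi = \varphi^+ + \psi^-$. You invoke Arveson's Radon--Nikodym theorem to encode all four pieces as compressions $W^*T\pi(\cdot)W$, $W^*(I-T)\pi(\cdot)W$, $W^*S\pi(\cdot)W$, $W^*(I-S)\pi(\cdot)W$, identify supports with restrictions of $\pi$ to range projections $E_T, E_S$, and convert total orthogonality into a central-cover statement $z(E_T)(I - z(E_S)) \ne 0$; the witness $\delta = W^*\,cT\,\pi(\cdot)W$ with $c = z(E_T)(I-z(E_S))$ then plays the same role as $\tilde{\Phi}_+^*\tilde{\pi}_3\tilde{\Phi}_+$. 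What the paper's approach buys is that the support theorem falls out of a machine they have already built and want to advertise (the colligation theorem), and it makes no direct appeal to the Radon--Nikodym ordering of CP maps. What your approach buys is that it is self-contained within standard CP-map theory, makes the role of the commutant and central projections transparent, and your one-sided comparison lemma (extremal is dominated in support by every decomposition, not just every extremal one) is a sharper intermediate statement than what the paper's symmetric argument isolates. The verifications you flag at the end --- that the minimal Stinespring of $\varphi^+$ sits on $E_T\mathcal{L}$, and that total orthogonality forces $E'z(E_S) = 0$ --- do check out and are the right things to worry about; I also verified that $cT \ne 0$ follows from $c \ne 0$ together with $z(E_T)T = T$, and that $R = cT$ is indeed $\le I - S$ via $R \le c$ and $c \le I - E_S \le I - S$.
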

We prove Theorem \ref{Wittstocksupport} at the end of Section \ref{agord}. Related uniqueness statements about generalized Stinespring representations have been recently obtained by Christensen in \cite[Theorem 3.1]{christensen}, who shows that maps of the form $\varphi(H)=W^*\pi(H)V$ have support uniqueness with respect to $\pi$ under controllability-observability minimality type assumptions. Our results show that in an extremal Wittstock decomposition, one cannot introduce extraneous representations.

\section{The Agler order and colligations} \label{agord}
Let $\mathcal{M}, \mathcal{N}, \tilde{\mathcal{N}}$ be $C^*$-algebras.
Let $\varphi: \mathcal{M} \rightarrow \mathcal{N}$ and 
	$\psi: \mathcal{M} \rightarrow \tilde{\mathcal{N}}$ be real completely bounded maps.
We say that $\varphi \leq \psi$ in the \dfn{Agler order} if:
\begin{enumerate}
	\item $\ker \varphi \subseteq \ker \psi,$
	\item the induced map $\gamma = \psi \circ \varphi^{-1}$ is completely positive.
\end{enumerate}
We say $\varphi$ is \dfn{Archimedian} if its range contains a strictly positive element.

The Agler order captures the various Agler models used for Nevanlinna-Pick interpolation in the Schur-Agler, Herglotz-Agler, Pick-Agler classes and so on, codifying the Lyapunov formulation taken in \cite{bgm1,bgm2, bmv1, bmv2, bmv3, pascoeinv}. 
For example, taking $\varphi(H)=H- X^*HX, \psi(H) = Y^*H+HY,$ we have that $\varphi \leq \psi$ corresponds to there being an analytic function from the disk to the right half plane (a Herglotz function) taking $X$ to $Y.$ (Similarly for noncommutative and commutative multivariable analogues, cf. \cite{ag90, pptdHerglotz}.) The case $\varphi(H)=(X^*H- HX)/2i, \psi(H) = Y^*H-HY/2i$
similarly corresponds to the existence of a Nevanlinna model for a Pick function as in \cite{ptdpick}.
The Agler order abstracts away the domain and range conditions, and interpolation interpretation for the more basal underlying condition of induced complete positivity. 

Let $\mathcal{M}, \mathcal{N}, \tilde{\mathcal{N}}$ be $C^*$-algebras where $\mathcal{N}, \tilde{\mathcal{N}}$ are concrete.
Let $\varphi: \mathcal{M} \rightarrow \mathcal{N}$ and 
	$\psi: \mathcal{M} \rightarrow \tilde{\mathcal{N}}$ be real completely bounded maps.
We say that $\varphi \preceq \psi$ in the \dfn{concrete Agler order} if there exists an operator $\Gamma$
such that $\Gamma^* \varphi(H)\Gamma = \psi(H).$

We note that if $\varphi$ is Archimedian that $\varphi \leq \psi$ (in the Agler order) if and only if
for every representation $\pi$ of $\tilde{\mathcal{N}}$ there exists a representation $\hat{\pi}$ of
$\mathcal{N}$ such that $\hat{\pi}\circ \varphi \preceq \pi \circ \psi.$

	Suppose $\varphi$ and $\psi$ are real completely bounded maps into concrete $C^*$-algebras.
	We say $\psi$ is \dfn{$\varphi$-colligatory} if for all Wittstock decompostions
		$$\varphi = \varphi^+ - \varphi^-,$$ $$\psi = \psi^+ - \psi^-,$$
	given Stinespring factorizations
		$$\psi^+ = \Psi_+^* \pi_1 \Psi_+,$$ $$\psi^- = \Psi_-^* \pi_2 \Psi_-,$$
		$$\varphi^+ = \Phi_+^* \pi_3 \Phi_+,$$ $$\varphi^- = \Phi_-^* \pi_4 \Phi_-,$$
	there is a partial isometry
		$$U= \bbm A & B \\ C & D \ebm$$
	and operator $\Gamma$ such that
		$$ \bbm \Psi_- \\ \Phi_+ \Gamma \ebm
		=\bbm A & B \\ C & D \ebm \bbm \Psi_+ \\ \Phi_- \Gamma \ebm,$$
	where $$\ran U = \vspan \bigcup_{H\in \mathcal{M}} \ran \bbm \pi_2(H) \Psi_- \\ \pi_3(H) \Phi_+ \Gamma \ebm$$
	and $$\ran U^* = \vspan \bigcup_{H\in \mathcal{M}} \ran \bbm \pi_1(H)\Psi_+ \\ \pi_4(H) \Phi_- \Gamma \ebm,$$
	and 
		$$\bbm A & B \\ C & D \ebm \bbm \pi_1 &  \\ & \pi_4 \ebm =  \bbm \pi_2 &  \\ & \pi_3 \ebm \bbm A & B \\ C & D \ebm.$$
	Note that, if $\Phi_+ - D\Phi_-$ is invertible, then
		$$\Psi_- = [A+B\Phi_-(\Phi_+ - D\Phi_-)^{-1}C]\Psi^{+}.$$
	We call such an expression a \dfn{$\varphi$-transfer function realization}.
	We call $U$ the \dfn{colligation operator}.

	We prove the following concrete result.
	\begin{theorem}\label{concretecoll}
		Let $\varphi$ and $\psi$ be real completely bounded maps on some $C^*$-algebra $\mathcal{M}$
		mapping into concrete $C^*$-algebras.
		The following are equivalent:
		\begin{enumerate}
			\item $\varphi \preceq \psi$ in the concrete Agler order,
			\item $\psi$ is $\varphi$-colligatory.
		\end{enumerate}
	\end{theorem}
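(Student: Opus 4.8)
The plan is a bidirectional ``lurking isometry'' argument; throughout I take the Stinespring factorizations to be minimal, so each $\pi_i$ is nondegenerate (the non-minimal case reduces to this). For the implication $(1)\Rightarrow(2)$, fix an operator $\Gamma$ with $\Gamma^*\varphi(H)\Gamma=\psi(H)$ for all $H$, witnessing $\varphi\preceq\psi$, and fix arbitrary Wittstock decompositions $\varphi=\varphi^+-\varphi^-$, $\psi=\psi^+-\psi^-$ together with minimal Stinespring factorizations as in the statement. Rewriting $\psi^+-\psi^-=\Gamma^*(\varphi^+-\varphi^-)\Gamma$ as $\psi^++\Gamma^*\varphi^-\Gamma=\psi^-+\Gamma^*\varphi^+\Gamma$ and substituting the Stinespring forms yields the operator identity
\[
\Psi_+^*\pi_1(H)\Psi_+ + \Gamma^*\Phi_-^*\pi_4(H)\Phi_-\Gamma = \Psi_-^*\pi_2(H)\Psi_- + \Gamma^*\Phi_+^*\pi_3(H)\Phi_+\Gamma,\qquad H\in\mathcal{M}.
\]
I would then define $U_0$ on $\mathcal{D}_1:=\vspan\bigcup_H\ran\bbm\pi_1(H)\Psi_+ \\ \pi_4(H)\Phi_-\Gamma\ebm$ by sending $\bbm\pi_1(H)\Psi_+ \\ \pi_4(H)\Phi_-\Gamma\ebm v$ to $\bbm\pi_2(H)\Psi_- \\ \pi_3(H)\Phi_+\Gamma\ebm v$ and extending linearly. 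Since inner products of such vectors are computed by the Stinespring relations, e.g.\ $\ip{\pi_1(H)\Psi_+ v}{\pi_1(G)\Psi_+ w}=\ip{\psi^+(G^*H)v}{w}$, the displayed identity applied with $H$ replaced by each $H_j^*H_i$ shows that the Gram matrix of any finite combination $\sum_i\bbm\pi_1(H_i)\Psi_+ \\ \pi_4(H_i)\Phi_-\Gamma\ebm v_i$ agrees with that of its image; hence $U_0$ is simultaneously well defined and isometric.

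Next I would extend $U_0$ to an isometry $\overline{\mathcal{D}_1}\to\overline{\mathcal{D}_2}$, where $\mathcal{D}_2:=\vspan\bigcup_H\ran\bbm\pi_2(H)\Psi_- \\ \pi_3(H)\Phi_+\Gamma\ebm$, and then to a partial isometry $U=\bbm A&B\\C&D\ebm$ on $\mathcal{L}_1\oplus\mathcal{L}_4$ by putting $U=0$ on $\overline{\mathcal{D}_1}^{\perp}$. Then $\ran U=\overline{\mathcal{D}_2}$ and $\ran U^*=\overline{\mathcal{D}_1}$, which are exactly the two range clauses of the definition. Replacing $H$ by $GH$ in the formula for $U_0$ gives the intertwining $U(\pi_1\oplus\pi_4)=(\pi_2\oplus\pi_3)U$ on $\mathcal{D}_1$, hence on $\overline{\mathcal{D}_1}$; it is trivial on the reducing subspace $\overline{\mathcal{D}_1}^{\perp}$, where $U$ vanishes. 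Finally the colligation identity $\bbm\Psi_-\\\Phi_+\Gamma\ebm=U\bbm\Psi_+\\\Phi_-\Gamma\ebm$ follows by taking $H=1$ when $\mathcal{M}$ is unital, and in general by feeding an approximate unit into $U_0$ and using nondegeneracy of the $\pi_i$ together with $\norm{U}\le1$. This shows $\psi$ is $\varphi$-colligatory.

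For $(2)\Rightarrow(1)$, choose any Wittstock decompositions and minimal Stinespring factorizations, and let $U=\bbm A&B\\C&D\ebm$ and $\Gamma$ be the colligation data provided by $\psi$ being $\varphi$-colligatory. Applying $U$ to $(\pi_1(H)\oplus\pi_4(H))\bbm\Psi_+\\\Phi_-\Gamma\ebm v$, using first the intertwining relation and then the colligation identity, gives $U\bbm\pi_1(H)\Psi_+\\\pi_4(H)\Phi_-\Gamma\ebm v=\bbm\pi_2(H)\Psi_-\\\pi_3(H)\Phi_+\Gamma\ebm v$. These vectors span $\ran U^*$ and $\ran U$ respectively, so the partial isometry $U$ preserves their inner products; computing each side via the Stinespring relations gives $\psi^+(G^*H)+\Gamma^*\varphi^-(G^*H)\Gamma=\psi^-(G^*H)+\Gamma^*\varphi^+(G^*H)\Gamma$ for all $G,H\in\mathcal{M}$, i.e.\ $\psi(G^*H)=\Gamma^*\varphi(G^*H)\Gamma$. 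Since products span a dense subspace of $\mathcal{M}$ and all the maps involved are bounded, this extends to $\psi(K)=\Gamma^*\varphi(K)\Gamma$ for every $K\in\mathcal{M}$, so $\varphi\preceq\psi$ via this $\Gamma$.

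The main obstacle is the simultaneous well-definedness and isometry of the lurking map $U_0$ — this is precisely what the displayed rearrangement of the hypothesis delivers (and whose backward reading drives $(2)\Rightarrow(1)$) — after which everything is bookkeeping to match the range and intertwining clauses of the definition. The one genuine technical wrinkle is the nonunital (and, if one wishes to allow non-minimal factorizations, degenerate) case, which is absorbed using approximate units and nondegeneracy of minimal Stinespring representations.
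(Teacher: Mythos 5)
Your proof is correct and takes essentially the same lurking-isometry approach as the paper for $(1)\Rightarrow(2)$: rearrange $\psi=\Gamma^*\varphi\Gamma$ into a sum-of-positives identity, polarize via $H\mapsto G^*H$ (the paper's $W^*V$), and read off an inner-product-preserving correspondence that defines the partial isometry $U$. The paper's proof is notably terse and in fact only writes out the forward direction, asserting the existence of $U$ without detailing well-definedness, the extension to a partial isometry with the stated ranges, the intertwining, or the colligation identity; you supply those, including the approximate-unit argument needed in the nonunital case. You also supply the converse $(2)\Rightarrow(1)$, which the paper omits entirely: your argument, running the intertwining and colligation identity through the partial isometry and then polarizing back, is the natural reversal and is correct (using that $\ran U^*$ is the initial space of $U$ and that products are dense in a $C^*$-algebra). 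So this is the same route, but rendered as a complete two-sided argument.
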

	\begin{proof}
	Take Wittstock decompostions
		$$\varphi = \varphi^+ - \varphi^-,$$ $$\psi = \psi^+ - \psi^-,$$
	and Stinespring factorizations
		$$\psi^+ = \Psi_+^* \pi_1 \Psi_+,$$ $$\psi^- = \Psi_-^* \pi_2 \Psi_-,$$
		$$\varphi^+ = \Phi_+^* \pi_3 \Phi_+,$$ $$\varphi^- = \Phi_-^* \pi_4 \Phi_-.$$
	Since $\psi = \Gamma^* \varphi\Gamma,$
	we see that $$\Psi_+^* \pi_1 \Psi_+ - \Psi_-^* \pi_2 \Psi_-
	= \Gamma^*\left[\Phi_+^* \pi_3 \Phi_+ -\Phi_-^* \pi_4 \Phi_- \right] \Gamma$$
	Rearranging, we get 
	$$\Psi_+^* \pi_1 \Psi_+ -\Gamma^*\Phi_-^* \pi_4 \Phi_-\Gamma
	= \Gamma^*\Phi_+^* \pi_3 \Phi_+ \Gamma + \Psi_-^* \pi_2 \Psi_-$$
	Evaluating at $W^*V$ gives
		$$\Psi_+^* \pi_1(W^*V) \Psi_+ + \Gamma^*\Phi_-^* \pi_4(W^*V) \Phi_-\Gamma
	=  \Psi_-^* \pi_2(W^*V) \Psi_- + \Gamma^*\Phi_+^* \pi_3(W^*V) \Phi_+ \Gamma.$$
	So,
		$$\Psi_+^* \pi_1(W)^*\pi_1(V) \Psi_+ + \Gamma^*\Phi_-^* \pi_4(W)^*\pi_4(V) \Phi_-\Gamma$$
	is equal to
	$$\Psi_-^* \pi_2(W)^*\pi_2(V) \Psi_- +  \Gamma^*\Phi_+^* \pi_3(W)^*\pi_3(V) \Phi_+ \Gamma.$$
	Factoring, we get that for vectors $v, w,$
	$$\left\langle \bbm \pi_1(V) \Psi_+ \\ \pi_4(V) \Phi_-\Gamma \ebm v, \bbm \pi_1(W) \Psi_+ \\ \pi_4(W) \Phi_-\Gamma \ebm w \right\rangle
	=\left\langle \bbm \pi_2(V) \Psi_- \\ \pi_3(V) \Phi_+\Gamma \ebm v, \bbm \pi_2(W) \Psi_- \\ \pi_3(W) \Phi_+\Gamma \ebm w \right\rangle.$$
	So, there is a partial isometry
		$$U= \bbm A & B \\ C & D \ebm$$
	and $\Gamma$ such that
		$$ \bbm \Psi_- \\ \Phi_+ \Gamma \ebm
		=\bbm A & B \\ C & D \ebm \bbm \Psi_+ \\ \Phi_- \Gamma \ebm,$$
	where $$\ran U = \vspan \bigcup_{H\in \mathcal{M}} \ran \bbm \pi_2(H) \Psi_- \\ \pi_3(H) \Phi_+ \Gamma \ebm$$
	and $$\ran U^* = \vspan \bigcup_{H\in \mathcal{M}} \ran \bbm \pi_1(H)\Psi_+ \\ \pi_4(H) \Phi_- \Gamma \ebm,$$
	and 
		$$\bbm A & B \\ C & D \ebm \bbm \pi_1 &  \\ & \pi_4 \ebm =  \bbm \pi_2 &  \\ & \pi_3 \ebm \bbm A & B \\ C & D \ebm.$$
	\end{proof}

	We see the immediate corollary for the Agler order.
	\begin{corollary}\label{inconcrete}
		Let $\varphi$ and $\psi$ be real completely bounded maps on some $C^*$-algebra $\mathcal{M}$
		mapping into $C^*$-algebras $\mathcal{N}, \tilde{\mathcal{N}}.$
		Suppose $\varphi$ is Archimedian.
		The following are equivalent:
		\begin{enumerate}
			\item $\varphi \leq \psi$ in the Agler order,
			\item For every representation $\pi$ of $\tilde{\mathcal{N}}$
			there exists a representation $\hat{\pi}$ of $\mathcal{N}$ such that $\pi\circ\psi$ is $\hat{\pi}\circ\varphi$-colligatory.
		\end{enumerate}
	\end{corollary}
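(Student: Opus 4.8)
The plan is to chain the two biconditionals already in hand. The corollary is labeled ``immediate'' precisely because $\varphi \leq \psi$ was noted to be equivalent, under the Archimedian hypothesis, to a statement about the \emph{concrete} Agler order of representation-composed maps, and Theorem \ref{concretecoll} identifies the concrete Agler order with the colligatory property. So the whole proof is a matter of composing these two equivalences and keeping the quantifiers straight.

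First I would invoke the noted characterization: since $\varphi$ is Archimedian, $\varphi \leq \psi$ in the Agler order holds if and only if for every representation $\pi$ of $\tilde{\mathcal{N}}$ there exists a representation $\hat{\pi}$ of $\mathcal{N}$ with $\hat{\pi}\circ\varphi \preceq \pi\circ\psi$ in the concrete Agler order. I would then fix an arbitrary such $\pi$ together with a candidate $\hat{\pi}$ and observe that $\pi\circ\psi$ and $\hat{\pi}\circ\varphi$ are real completely bounded maps into concrete $C^*$-algebras: representations are $*$-homomorphisms, hence real, completely positive, and completely bounded, and all three properties are preserved under composition, while the codomains $\mathcal{B}(\mathcal{L}_\pi)$ and $\mathcal{B}(\mathcal{L}_{\hat{\pi}})$ are concrete by construction. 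Theorem \ref{concretecoll}, applied with $\hat{\pi}\circ\varphi$ in the role of $\varphi$ and $\pi\circ\psi$ in the role of $\psi$, then gives that $\hat{\pi}\circ\varphi \preceq \pi\circ\psi$ if and only if $\pi\circ\psi$ is $(\hat{\pi}\circ\varphi)$-colligatory.

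Reinstating the quantifiers over $\pi$ and $\hat{\pi}$ and substituting this last equivalence into the Archimedian characterization yields exactly statement (2), completing the proof. I do not expect a genuine obstacle here; the only points that need a moment's care are verifying the concreteness hypothesis of Theorem \ref{concretecoll} for the composed maps, which is immediate as just noted, and threading the ``for every $\pi$, there exists $\hat{\pi}$'' quantifier block through both biconditionals without accidentally transposing its order.
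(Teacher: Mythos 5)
Your proposal is correct and matches the paper's intended argument: the paper gives no explicit proof, calling this an immediate corollary of Theorem \ref{concretecoll} combined with the earlier noted equivalence (for Archimedian $\varphi$) between $\varphi \leq \psi$ and the existence, for each $\pi$, of $\hat{\pi}$ with $\hat{\pi}\circ\varphi \preceq \pi\circ\psi$. Your chaining of these two biconditionals, with the routine check that the composed maps satisfy the hypotheses of Theorem \ref{concretecoll}, is exactly the right argument.
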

	
	Say a real completely bounded map $\varphi$ is of \dfn{Lyapunov type}
	if it admits a Wittstock decomposition $\varphi = \pi - \varphi^-$
	where $\pi$ is a representation and $\varphi^-$ is strictly completely contractive.

	Note that any $\varphi$ of Lyapunov type is \emph{a fortiori} Archimedian.
	\begin{corollary}\label{lyapunovtype}
		Let $\varphi$ and $\psi$ be real completely bounded maps on some $C^*$-algebra $\mathcal{M}$
		mapping into concrete $C^*$-algebras.
		Suppose $\varphi$ is of Lyapunov type.
		The following are equivalent:
		\begin{enumerate}
			\item $\varphi \preceq \psi$ in the concrete Agler order,
			\item $\psi$ is $\varphi$-colligatory,
			\item $\psi$ has a $\varphi$-transfer function realization.
		\end{enumerate}
	\end{corollary}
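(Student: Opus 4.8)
The plan is to establish the cycle $(1)\Leftrightarrow(2)\Rightarrow(3)\Rightarrow(1)$, with the Lyapunov-type hypothesis entering only in the step $(2)\Rightarrow(3)$. The equivalence $(1)\Leftrightarrow(2)$ is precisely Theorem \ref{concretecoll}, which is already stated for arbitrary real completely bounded maps into concrete $C^*$-algebras, so nothing need be added there.

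For $(2)\Rightarrow(3)$ I would exploit the Lyapunov-type Wittstock decomposition $\varphi = \pi - \varphi^-$ furnished by hypothesis, so that $\varphi^+ = \pi$ is a representation. Apply the definition of $\varphi$-colligatory to this decomposition, to the trivial Stinespring factorization $\varphi^+ = I^*\pi I$ (so $\Phi_+ = I$ and $\pi_3 = \pi$), to any Stinespring factorization $\varphi^- = \Phi_-^*\pi_4\Phi_-$, and to any Wittstock decomposition and Stinespring factorizations of $\psi$; this produces a colligation operator $U = \bbm A & B \\ C & D\ebm$ and an operator $\Gamma$ satisfying the colligation identity. Since $D$ is a corner of the contraction $U$ we have $\|D\|\le 1$, and since $\varphi^-$ is a strictly completely contractive completely positive map, $\|\Phi_-\|^2 = \|\Phi_-^*\Phi_-\| = \|\varphi^-(1)\| < 1$ (in the non-unital case one runs this through an approximate unit, using nondegeneracy of $\pi_4$). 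Hence $\|D\Phi_-\| < 1$, so $\Phi_+ - D\Phi_- = I - D\Phi_-$ is invertible by a Neumann series, and the transfer-function identity recorded in the definition of $\varphi$-colligatory gives the $\varphi$-transfer function realization $\Psi_- = [A + B\Phi_-(I - D\Phi_-)^{-1}C]\Psi_+$.

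For $(3)\Rightarrow(1)$ I would run the Stinespring computation of Theorem \ref{concretecoll} backwards. A $\varphi$-transfer function realization comes equipped with its colligation data: a partial isometry $U = \bbm A & B \\ C & D\ebm$ with $\bbm\pi_2 & \\ & \pi_3\ebm U = U\bbm\pi_1 & \\ & \pi_4\ebm$, the range identity $\ran U^* = \vspan\bigcup_{H\in\mathcal{M}}\ran\bbm\pi_1(H)\Psi_+ \\ \pi_4(H)\Phi_-\Gamma\ebm$, and the colligation identity $\bbm\Psi_- \\ \Phi_+\Gamma\ebm = U\bbm\Psi_+ \\ \Phi_-\Gamma\ebm$. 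Left-multiplying the colligation identity by $\bbm\pi_2(H) & \\ & \pi_3(H)\ebm$ and applying the intertwining yields $\bbm\pi_2(H)\Psi_- \\ \pi_3(H)\Phi_+\Gamma\ebm = U\bbm\pi_1(H)\Psi_+ \\ \pi_4(H)\Phi_-\Gamma\ebm$ for every $H\in\mathcal{M}$, and the vector on the right lies in $\ran U^*$, the initial space of $U$, on which $U$ preserves inner products. Pairing two such expressions for $H$ and $K$ and using $\pi_i(K)^*\pi_i(H) = \pi_i(K^*H)$ gives, after contracting the Stinespring coefficients, $\psi^-(K^*H) + \Gamma^*\varphi^+(K^*H)\Gamma = \psi^+(K^*H) + \Gamma^*\varphi^-(K^*H)\Gamma$. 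Since elements of the form $K^*H$ span $\mathcal{M}$ and all four maps are linear, this rearranges to $\psi = \psi^+ - \psi^- = \Gamma^*(\varphi^+-\varphi^-)\Gamma = \Gamma^*\varphi\Gamma$, so $\varphi\preceq\psi$ in the concrete Agler order.

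The only real friction I anticipate is bookkeeping on two points. First, one must be clear that ``$\psi$ has a $\varphi$-transfer function realization'' carries the full colligation package --- partial isometry, intertwining, and especially the two range identities --- and not merely the operator identity $\Psi_- = M\Psi_+$; with only the latter one would first have to reconstruct a colligation from the initial and final space structure the formula forces, which is more delicate. Second, extracting $\|\Phi_-\| < 1$ from strict complete contractivity needs a line of care when $\mathcal{M}$ is non-unital. Neither is serious, but each deserves a sentence in the written-out proof.
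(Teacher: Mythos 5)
The paper states this corollary with no proof at all, treating it as an immediate consequence of Theorem~\ref{concretecoll} and the Lyapunov-type hypothesis, so there is no ``paper proof'' to compare against; your write-up supplies the missing argument, and it is correct. You identify the genuine content: the Lyapunov-type hypothesis enters only to guarantee the invertibility clause $\Phi_+ - D\Phi_- = I - D\Phi_-$ in the definition of a $\varphi$-transfer function realization, and you derive this cleanly from $\|D\|\le 1$ (partial isometry) and $\|\Phi_-\|<1$ (strict complete contractivity of $\varphi^-$ through the Stinespring coefficient), with an appropriate caveat for the non-unital case.

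One remark worth making. Under the paper's definition --- ``We call such an expression a $\varphi$-transfer function realization,'' where ``such'' refers to the expression arising inside the colligatory framework --- the implication $(3)\Rightarrow(2)$ is essentially vacuous (a transfer function realization \emph{is} a colligation with the extra invertibility property, and you just forget the invertibility), so the shortest cycle is $(1)\Leftrightarrow(2)$ by Theorem~\ref{concretecoll} and $(2)\Leftrightarrow(3)$ directly. Your $(3)\Rightarrow(1)$ is longer but actually adds value: the printed proof of Theorem~\ref{concretecoll} only establishes $(1)\Rightarrow(2)$ and leaves $(2)\Rightarrow(1)$ to the reader, and your reversal of the Stinespring computation --- left-multiplying the colligation identity by the block representation, using the intertwining relation, and that $U$ is isometric on its initial space $\ran U^*$ --- is exactly the argument that fills that gap. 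Your flagged concern about whether ``transfer function realization'' carries the full colligation package (ranges, intertwining) is legitimate given how informally the paper phrases the definition, and your reading is the only one under which the corollary is true; without the range conditions and intertwining you could not run the inner-product computation, because you would have no control over where $U$ is isometric.
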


	\begin{observation}\label{factoralong}
	We also note that if $\pi_i = \bbm \hat{\pi_i} & \\ & \tilde{\pi}_i\ebm$
	such that $\supp \hat{\pi}_i \perp \supp \tilde{\pi}_j,$ then the colligation operator factors 
	as
		$$\bbm \hat{A} & & \hat{B} & \\ & \tilde{A} & & \tilde{B} \\ \hat{C} & & \hat{D} & \\ & \tilde{C} & & \tilde{D}\ebm.$$
	Thus, $$\pi\circ{\psi}^\pm = \hat{\psi}^\pm + \tilde{\psi}^\pm$$ where $\supp \hat{\psi}^{\pm} \perp \supp \tilde{\psi}^{\pm},$
	and $$\hat{\pi}\circ{\varphi^{\pm}} = \hat{\varphi}^{\pm} + \tilde{\varphi}^{\pm}$$
	where $\supp \hat{\varphi}^{\pm} \perp \supp \tilde{\varphi}^{\pm}$ and, letting $\hat{\varphi} = \hat{\varphi}^+ -\hat{\varphi}^-$
	and $\hat{\psi} = \hat{\psi}^+ -\hat{\psi}^-,$
	$$\hat{\varphi}\preceq \hat{\psi}, \tilde{\varphi}\preceq \tilde{\psi}.$$
	\end{observation}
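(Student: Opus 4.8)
The tool is the observation, recorded just after the definition of total orthogonality, that an operator intertwining two totally orthogonal representations must vanish. By the definition of $\varphi$-colligatory, the colligation operator $U=\bbm A & B \\ C & D\ebm$ intertwines $\bbm\pi_1 & \\ & \pi_4\ebm$ with $\bbm\pi_2 & \\ & \pi_3\ebm$; that is, $A\pi_1=\pi_2A$, $B\pi_4=\pi_2B$, $C\pi_1=\pi_3C$, $D\pi_4=\pi_3D$. First I would write each of $A,B,C,D$ in $2\times 2$ block form against the decompositions $\pi_i=\hat\pi_i\oplus\tilde\pi_i$ of source and target. The corner of $A$ sending the $\tilde\pi_1$-summand to the $\hat\pi_2$-summand intertwines $\tilde\pi_1$ with $\hat\pi_2$, and the corner sending the $\hat\pi_1$-summand to the $\tilde\pi_2$-summand intertwines $\hat\pi_1$ with $\tilde\pi_2$; since $\supp\hat\pi_i\perp\supp\tilde\pi_j$ for all $i,j$ and total orthogonality is symmetric, both corners vanish, so $A=\hat A\oplus\tilde A$. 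The same reasoning gives $B=\hat B\oplus\tilde B$, $C=\hat C\oplus\tilde C$, $D=\hat D\oplus\tilde D$, because every cross corner intertwines one of the $\hat\pi$'s with one of the $\tilde\pi$'s. Rewriting $U$ in the coordinate order $(\hat\cdot,\tilde\cdot,\hat\cdot,\tilde\cdot)$ on source and target then yields exactly the displayed $4\times 4$ matrix, and regrouping the hat and tilde coordinates exhibits $U=\hat U\oplus\tilde U$ with $\hat U=\bbm\hat A & \hat B\\\hat C & \hat D\ebm$ and $\tilde U=\bbm\tilde A & \tilde B\\\tilde C & \tilde D\ebm$, each a partial isometry because a direct sum is a partial isometry exactly when each summand is.

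Next I would split the Stinespring coefficients conformally, $\Psi_\pm=\bbm\hat\Psi_\pm\\\tilde\Psi_\pm\ebm$ and $\Phi_\pm=\bbm\hat\Phi_\pm\\\tilde\Phi_\pm\ebm$, and set $\hat\psi^+=\hat\Psi_+^*\hat\pi_1\hat\Psi_+$, $\hat\psi^-=\hat\Psi_-^*\hat\pi_2\hat\Psi_-$, $\hat\varphi^+=\hat\Phi_+^*\hat\pi_3\hat\Phi_+$, $\hat\varphi^-=\hat\Phi_-^*\hat\pi_4\hat\Phi_-$, together with the tilde analogues. Then $\pi\circ\psi^\pm=\hat\psi^\pm+\tilde\psi^\pm$ and $\hat\pi\circ\varphi^\pm=\hat\varphi^\pm+\tilde\varphi^\pm$ as sums of completely positive maps. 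The minimal Stinespring representation of $\hat\psi^+$ (resp.\ $\hat\psi^-$) is a subrepresentation of $\hat\pi_1$ (resp.\ $\hat\pi_2$), and that of $\tilde\psi^+$ (resp.\ $\tilde\psi^-$) a subrepresentation of $\tilde\pi_1$ (resp.\ $\tilde\pi_2$); since a subrepresentation of one member of a totally orthogonal pair is totally orthogonal to a subrepresentation of the other, the hypothesis $\hat\pi_i\perp\tilde\pi_i$ forces $\supp\hat\psi^\pm\perp\supp\tilde\psi^\pm$, and identically $\supp\hat\varphi^\pm\perp\supp\tilde\varphi^\pm$.

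To finish I would obtain $\hat\varphi\preceq\hat\psi$ (the tilde case being identical) by showing that $\hat U$ together with the original operator $\Gamma$ forms a colligation for the hat data. The identity $\bbm\hat\Psi_-\\\hat\Phi_+\Gamma\ebm=\hat U\bbm\hat\Psi_+\\\hat\Phi_-\Gamma\ebm$ is the hat block of $\bbm\Psi_-\\\Phi_+\Gamma\ebm=U\bbm\Psi_+\\\Phi_-\Gamma\ebm$ once $U=\hat U\oplus\tilde U$ and the conformal splittings are used; the intertwining $\hat U\bbm\hat\pi_1 & \\ & \hat\pi_4\ebm=\bbm\hat\pi_2 & \\ & \hat\pi_3\ebm\hat U$ is a compression of the original one; and the span conditions on $\ran{\hat U}$ and $\ran{\hat U^*}$ descend from those on $\ran{U}$ and $\ran{U^*}$ (see below). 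With this colligation in hand, running the lurking-isometry computation in the proof of Theorem \ref{concretecoll} in reverse --- evaluating inner-product preservation of the partial isometry $\hat U$ on its initial space $\ran{\hat U^*}$ at the vectors $\bbm\hat\pi_1(H)\hat\Psi_+\\\hat\pi_4(H)\hat\Phi_-\Gamma\ebm v$, using $\hat\pi_i(G)^*\hat\pi_i(H)=\hat\pi_i(G^*H)$ and that $\mathcal{M}$ is spanned by products $G^*H$ --- recovers $\hat\psi=\Gamma^*\hat\varphi\Gamma$, that is, $\hat\varphi\preceq\hat\psi$.

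The one place I expect to need care is the bookkeeping behind ``the span conditions descend'': it is false in general that the closed span of a family of blockwise vectors equals the direct sum of the coordinatewise spans, so one cannot simply project the Arveson-type span conditions. What does work is to note that the span in question is $\ran{U^*}$, which by $U=\hat U\oplus\tilde U$ splits as $\ran{\hat U^*}\oplus\ran{\tilde U^*}$; since each generating vector then has its hat component inside $\ran{\hat U^*}$, the closed span of those hat components sits inside $\ran{\hat U^*}$, and comparing this with $\ran{U^*}=\ran{\hat U^*}\oplus\ran{\tilde U^*}$ forces it to be exactly $\ran{\hat U^*}$; the condition on $\ran{\hat U}$ then follows by applying $\hat U$ (isometric on its initial space) and the intertwining. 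Everything else is a formal consequence of $U=\hat U\oplus\tilde U$.
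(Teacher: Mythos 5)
Your proof is correct and follows exactly the route the paper intends: the paper states this as an unproved Observation, and the only tool it has set up for it is the generalized Schur's lemma for totally orthogonal representations, which you apply to the intertwining relation $U(\pi_1\oplus\pi_4)=(\pi_2\oplus\pi_3)U$ to kill the cross corners of $A,B,C,D$ and decouple everything. Your extra care about why the Arveson-type span conditions split across the direct sum (and hence why the reversed lurking-isometry computation is legitimate on $\ran \hat{U}^*$) is a genuine detail the paper glosses over, and your resolution of it is right.
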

	
	\begin{proof}[Proof of Theorem \ref{Wittstocksupport}]
		We consider the case of the positive supports. The negative case is similar.

		Let $\varphi$ have two Wittstock decompositions
		$$\varphi = \varphi^+ - \varphi^- = \Psi_+ - \Psi_-,$$
			and Stinespring factorizations
		$$\psi^+ = \Psi_+^* \pi_1 \Psi_+,$$ $$\psi^- = \Psi_-^* \pi_2 \Psi_-,$$
		$$\varphi^+ = \Phi_+^* \pi_3 \Phi_+,$$ $$\varphi^- = \Phi_-^* \pi_4 \Phi_-.$$
		By Theorem \ref{concretecoll}, there is a colligation operator such that
			$$ \bbm \pi_2 \Psi_- \\ \pi_3 \Phi_+  \ebm
		=\bbm A & B \\ C & D \ebm \bbm \pi_1 \Psi_+ \\ \pi_4 \Phi_- \ebm$$
		Write $$\pi_i = \bbm\hat{\pi_i}&\\&\tilde{\pi_i}\ebm$$ 
		where $$\supp \pi_1 \perp \supp \tilde{\pi_i}.$$
		By Observation \ref{factoralong}, factor the colligation operator as, noting that there is no $\tilde{\pi_1},$
		$$\bbm \hat{\pi_2} \hat{\Psi_-} \\ \tilde{\pi_2} \tilde{\Psi_-}\\ \hat{\pi_3} \hat{\Phi_+} \\ \tilde{\pi_3} \tilde{\Phi_+} \ebm
		=  \bbm \hat{A} & \hat{B} & \\ & & \tilde{B} \\ \hat{C} & \hat{D} & \\ & & \tilde{D}\ebm
		\bbm \pi_1 \Psi_+ \\ \hat{\pi_4}  \hat{\Phi_-} \\ \tilde{\pi_4} \tilde{\Phi_-} \ebm.$$
		
		So, we have that
			$$\bbm \tilde{\pi_2} \tilde{\Psi_-}\\ \tilde{\pi_3} \tilde{\Phi_+} \ebm
		=  \bbm  0& \tilde{B}  \\ 0 & \tilde{D}\ebm
		\bbm 0 \\ \tilde{\pi_4} \tilde{\Phi_-} \ebm.$$
		Hence, by Theorem \ref{concretecoll},
		we see that 
		$$- \tilde{\Psi_-}^*\tilde{\pi_2}\tilde{\Psi_-} = \tilde{\Phi_+}^* \tilde{\pi_3} \tilde{\Phi_+} -
		\tilde{\Phi_-}^* \tilde{\pi_4} \tilde{\Phi_-}.$$
		If $\tilde{\Phi_+}^* \tilde{\pi_3} \tilde{\Phi_+} \neq 0$,
		taking $\varphi^+ - \tilde{\Phi_+}^* \tilde{\pi_3} \tilde{\Phi_+}$
		and $\varphi^- - \tilde{\Phi_+}^* \tilde{\pi_3} \tilde{\Phi_+}$
		witnesses the nonextremality of the Wittstock decomposition.
	\end{proof}

\section{Truncating irrelevant representations and the commutant coefficient theorem}
	The following proposition shows that one can choose a natural tensored representation in the Stinespring representation of a (homomorphic) noncommutative conditional expectation. Given $\mathcal{N} \subseteq \BH$ we use $\mathcal{N}'$ to denote the \dfn{commutant} of $\mathcal{N},$ the set of elements in $\BH$ which commute with every element of $\mathcal{N}.$
	\begin{proposition}
	Let $\mathcal{M}\subseteq \mathcal{B}(\mathcal{L})$ be a $C^*$-algebra.
	Let $\mathcal{N}$ be a sub-$C^*$-algebra unitally included in $\mathcal{M}$
	such that $\mathcal{M}$ is generated by $\mathcal{N}$ and $\mathcal{N}'.$ 
	Let $\pi: \mathcal{N}\rightarrow \BH.$
	Let $E: \mathcal{H} \rightarrow \mathcal{L}$ such that $E^*nE=\pi(n).$
	There is a representation
	$\hat{\pi}:\mathcal{M}\rightarrow \mathcal{B}(\mathcal{H}\otimes \mathcal{K}),$ a unit vector $e_0 \in \mathcal{K}$ and a partial isometry $P: \mathcal{L}\rightarrow \mathcal{H}\otimes \mathcal{K}$ with range containing all vectors of the form $v\otimes e_0$ such that 
		$$E^*mE = (I\otimes e_0^*)\hat{\pi}(m)(I\otimes e_0),$$
		$$P m = \hat{\pi}(m)P,$$
	and $$\hat{\pi}|_\mathcal{N} =  \pi\otimes I.$$
\end{proposition}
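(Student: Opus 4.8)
The plan is to produce $\hat{\pi}$ as an amplification of the representation occurring in the minimal Stinespring factorization of the completely positive map $\Phi(m)=E^{*}mE$, once the restriction of that representation to $\mathcal{N}$ has been identified, up to quasi-equivalence, with $\pi$. The hypothesis that $\mathcal{M}$ is generated by $\mathcal{N}$ and $\mathcal{N}'$ is exactly what forces this identification.

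First some preliminaries. Let $\iota$ be the identity representation of $\mathcal{M}\subseteq\mathcal{B}(\mathcal{L})$, so $(\iota,E)$ is a (generally non-minimal) Stinespring factorization of $\Phi$; put $\mathcal{L}_{0}=\overline{\iota(\mathcal{M})E\mathcal{H}}$ and $\sigma=\iota|_{\mathcal{L}_{0}}$, so that $(\sigma,E)$ is minimal and $\mathcal{L}_{0}$ reduces $\iota$ (since $\mathcal{M}$ is self-adjoint). As $\Phi|_{\mathcal{N}}=\pi$ is a unital representation, $E$ is an isometry, and expanding $E^{*}n_{1}n_{2}E=(E^{*}n_{1}E)(E^{*}n_{2}E)$ yields $nE=E\pi(n)$ for all $n\in\mathcal{N}$. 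Hence $\ran{E}$ is $\sigma(\mathcal{N})$-reducing with $\sigma|_{\mathcal{N}}$ acting on it as $\pi$, and, because elements of $\mathcal{N}'$ commute with $\mathcal{N}$, for each $a\in\mathcal{N}'$ the operator $aE$ intertwines $\pi$ with $\sigma|_{\mathcal{N}}$.

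Next I would show that $\sigma|_{\mathcal{N}}$ is quasi-equivalent to $\pi$. On one hand $\pi$ is a subrepresentation of $\sigma|_{\mathcal{N}}$ by the previous paragraph. On the other, since $\mathcal{N}$ and $\mathcal{N}'$ commute and generate $\mathcal{M}$ one has $\mathcal{L}_{0}=\overline{\sum_{a\in\mathcal{N}'}\overline{\ran{(aE)}}}$, where each $\overline{\ran{(aE)}}$ is a $\sigma(\mathcal{N})$-reducing subspace on which $\sigma|_{\mathcal{N}}$ is, via the polar decomposition of the intertwiner $aE$, unitarily equivalent to a subrepresentation of $\pi$; consequently a subrepresentation of $\sigma|_{\mathcal{N}}$ disjoint from $\pi$ would be orthogonal to every $\overline{\ran{(aE)}}$, hence to $\mathcal{L}_{0}$, hence zero, so $\sigma|_{\mathcal{N}}$ is quasi-contained in $\pi$. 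Combining the two gives $\sigma|_{\mathcal{N}}\otimes I_{\ell^{2}}\cong\pi\otimes I_{\ell^{2}}$ for a sufficiently large $\ell^{2}$.

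Finally I assemble the data. Fix such an $\ell^{2}$, with a distinguished unit vector $\delta_{0}$, set $\mathcal{K}=\mathbb{C}e_{0}\oplus\ell^{2}$ with $e_{0}$ a unit vector, and split $\mathcal{L}_{0}\otimes\ell^{2}=(\ran{E}\otimes\delta_{0})\oplus\mathcal{R}$. The first summand carries the $\mathcal{N}$-representation $\sigma|_{\mathcal{N}}\otimes I\cong\pi$; using the quasi-equivalence together with the standard absorption fact that $\theta\oplus(\pi\otimes I_{\ell^{2}})\cong\pi\otimes I_{\ell^{2}}$ whenever $\theta$ is quasi-contained in $\pi$, the $\mathcal{N}$-representation carried by $\mathcal{R}$ is isomorphic to $\pi\otimes I_{\ell^{2}}$. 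Hence there is a unitary $u\colon\mathcal{L}_{0}\otimes\ell^{2}\to\mathcal{H}\otimes\mathcal{K}$ with $u(Eh\otimes\delta_{0})=h\otimes e_{0}$ which intertwines $\sigma|_{\mathcal{N}}\otimes I$ with $\pi\otimes I_{\mathcal{K}}$. Define $\hat{\pi}(m)=u\bigl(\sigma(m)\otimes I_{\ell^{2}}\bigr)u^{*}$, a representation of $\mathcal{M}$ on $\mathcal{H}\otimes\mathcal{K}$ with $\hat{\pi}|_{\mathcal{N}}=\pi\otimes I_{\mathcal{K}}$, and $P=u\circ(\,\cdot\,\otimes\delta_{0})\circ p_{0}$ where $p_{0}\colon\mathcal{L}\to\mathcal{L}_{0}$ is the orthogonal projection. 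As a composite of intertwiners $Pm=\hat{\pi}(m)P$; $P$ is a partial isometry with initial space $\mathcal{L}_{0}\supseteq\ran{E}$ and range $u(\mathcal{L}_{0}\otimes\delta_{0})\supseteq\mathcal{H}\otimes e_{0}$; and $PE=W$, where $Wv=v\otimes e_{0}$. Since $P^{*}\hat{\pi}(m)=mP^{*}$ and $P^{*}P$ is the projection onto $\mathcal{L}_{0}$,
\[
(I\otimes e_{0}^{*})\hat{\pi}(m)(I\otimes e_{0})=W^{*}\hat{\pi}(m)W=E^{*}P^{*}\hat{\pi}(m)PE=E^{*}m\,P^{*}P\,E=E^{*}mE .
\]
The step I expect to be the main obstacle is the quasi-equivalence of $\sigma|_{\mathcal{N}}$ with $\pi$: this is precisely where the generation hypothesis on $\mathcal{M}$ is used, and where an arbitrary Stinespring dilation would fail to be of tensored form; the absorption fact it feeds into also needs some care over the cardinalities of the multiplicity spaces, which is why $\ell^{2}$ is only specified to be sufficiently large.
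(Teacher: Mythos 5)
Your proof is correct and follows essentially the same strategy as the paper's (admittedly much sketchier) argument: both identify the minimal $\mathcal{M}$-reducing subspace containing $E\mathcal{H}$ as being swept out by the intertwiners $aE$, $a\in\mathcal{N}'$, use polar decomposition to see each piece is a subrepresentation of $\pi$, and then pad to exact tensored form by a Hilbert-hotel/absorption argument. Your rendering via quasi-equivalence and the standard absorption lemma is a complete and careful version of what the paper only outlines, and your identification of where the generation hypothesis enters matches the paper's intent.
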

\begin{proof}
	 We will show that there is a reducing subspace $\mathcal{L}'$ for $\mathcal{M}$
	 elements of $\mathcal{N}$ 
	are unitarialy equivalent to $\bigoplus P^*\pi(n)P$ on $\mathcal{L}'$
	for some isometries $P$ which reduce $\mathcal{N},$ and at least one $P$ is the identity.
	Taking a sufficiently big direct sum of $\mathcal{M}$ reduced to $\mathcal{L}'$
	by some kind of Hilbert hotel argument. (That is, pairing up incomplete representations
	$P^*\pi(n)P$ with their orthogonal complement.) 
	
	We will show that given $m \in \mathcal{N}'$
	and $\mathcal{L}_0$ containing $\mathcal{H}$ such that $\mathcal{N}$ has the desired form
	and $\mathcal{L}_0 \neq m\mathcal{H} + \mathcal{L}_0$
	then there is a larger $\mathcal{L}_1$ such that $\mathcal{N}$ has the desired form. Let $\hat{m}=P_{\mathcal{L}_0^\perp}mP_{\mathcal{H}}$
	Note $nm = mn.$ So, since $\mathcal{L}_0$ reduces $n,$
	$$n\hat{m} = \hat{m}n.$$
	Thus, $\hat{m}^*n\hat{m} = \hat{m}^*\hat{m}n=n\hat{m}^*\hat{m}.$
	Letting $P=\hat{m}(\hat{m}^*\hat{m})^{\dagger 1/2}.$
	So $P^*nP=P_{\ran \hat{m}^*}^*nP_{\ran \hat{m}^*}.$ Note
	$P_{\ran \hat{m}^*}^*nP_{\ran \hat{m}^*}$ is a subrepresentation of
	$\pi.$
	
	So there is a minimal $\mathcal{L}$ such that 
	$\mathcal{L} = m\mathcal{H} + \mathcal{L}$
	for all $m$ and $\mathcal{L}$ reduces $\mathcal{N}.$
	If fact, $\mathcal{L}$ is exactly the minimal reducing subspace for $\mathcal{M}$ containing $\mathcal{H}.$
\end{proof}

	Let $\pi$ be a representation of a $C^*$-algebra $\mathcal{M}$. 
	Call a complete bounded map $\psi$ \dfn{$\pi$-pure} if 
	$$\psi =  {\Psi_+}^*(I \otimes \pi)\Psi_+  - {\Psi_-}^* (I \otimes \pi) \Psi_-.$$
	We see the following immediate corollary of the above representation theorem.
	\begin{corollary}\label{commrep}
		Let $\varphi$ and $\psi$ be real completely bounded on some $C^*$-algebra $\mathcal{M}.$
		Assume $\psi$ is $\pi$-pure.
		The following are equivalent:
		\begin{enumerate}
			\item $\varphi \leq \psi$ in the Agler order,
			\item There exists a representation $\hat{\pi}$ such that
			$\hat{\pi}\circ \varphi$ is $\pi$-pure
			and $\psi$ is $\hat{\pi}\circ \varphi$-colligatory.
		\end{enumerate}
	\end{corollary}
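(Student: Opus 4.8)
The plan is to read Corollary~\ref{commrep} off from Theorem~\ref{concretecoll} together with the preceding representation proposition, which is precisely the device needed to put the Stinespring representation attached to a map dominating $\varphi$ into the tensored form $I\otimes\pi$.

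For the direction (2)$\Rightarrow$(1) the $\pi$-purity hypothesis is not needed. If $\psi$ is $\hat\pi\circ\varphi$-colligatory then, since $\hat\pi\circ\varphi$ and $\psi$ both map into concrete $C^*$-algebras, Theorem~\ref{concretecoll} gives $\hat\pi\circ\varphi\preceq\psi$ in the concrete Agler order, i.e. $\psi(H)=\Gamma^*\hat\pi(\varphi(H))\Gamma$ for some operator $\Gamma$. From this identity, $\varphi(H)=0$ forces $\psi(H)=0$, so $\ker\varphi\subseteq\ker\psi$; moreover $\Gamma^*\hat\pi(\cdot)\Gamma$ is a completely positive map whose composition with $\varphi$ is $\psi$, so $\psi\circ\varphi^{-1}$ is completely positive. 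Hence $\varphi\le\psi$ in the Agler order.

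For (1)$\Rightarrow$(2), start from $\varphi\le\psi$, so $\gamma=\psi\circ\varphi^{-1}$ is completely positive and $\psi=\gamma\circ\varphi$. Take a Stinespring factorization $\gamma=V^*\rho(\cdot)V$, where $\rho$ is a representation of the codomain of $\varphi$; then $\psi(H)=V^*\rho(\varphi(H))V$, so $\rho\circ\varphi\preceq\psi$ and, by Theorem~\ref{concretecoll}, $\psi$ is already $\rho\circ\varphi$-colligatory. It remains to upgrade $\rho$ to a representation $\hat\pi$ for which $\hat\pi\circ\varphi$ is $\pi$-pure while retaining $\hat\pi\circ\varphi\preceq\psi$. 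This is where the preceding Proposition enters: realizing $\rho$ as a compression $E^*(\cdot)E$ inside an ambient algebra in which the distinguished copy of $\pi(\mathcal{M})$ and its commutant generate everything --- a generation requirement arranged by first passing to a sufficiently inflated (amplified) representation --- the Proposition produces a representation $\hat\pi$ with $\hat\pi|_{\pi(\mathcal{M})}=\pi\otimes I$, a partial isometry $P$ intertwining $\hat\pi$ with the ambient algebra, and the compression identity $\rho=(I\otimes e_0^*)\hat\pi(\cdot)(I\otimes e_0)$. Pushing the Stinespring data of the Wittstock summands $\varphi^{\pm}$ of $\varphi$ through $P$ presents each summand of $\hat\pi\circ\varphi$ in the form $\Psi_{\pm}^*(I\otimes\pi)\Psi_{\pm}$, so $\hat\pi\circ\varphi$ is $\pi$-pure, while the compression identity turns $\rho\circ\varphi\preceq\psi$ into $\hat\pi\circ\varphi\preceq\psi$. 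Theorem~\ref{concretecoll} then yields that $\psi$ is $\hat\pi\circ\varphi$-colligatory, which is (2).

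The step I expect to be the main obstacle is this last construction: verifying that the generation hypothesis of the Proposition can genuinely be met after inflating $\rho$, and that the intertwiner $P$ it returns transports the colligation relations of Theorem~\ref{concretecoll} without loss, so that $\hat\pi\circ\varphi$ comes out simultaneously $\pi$-pure and below $\psi$. The rest is routine manipulation of Stinespring factorizations and unwinding of the two Agler orders.
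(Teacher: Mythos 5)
Your direction (2)$\Rightarrow$(1) is fine and is essentially the routine unwinding one expects: colligatory $\Rightarrow$ concrete Agler order via Theorem~\ref{concretecoll}, and $\Gamma^*\hat\pi(\cdot)\Gamma$ is a completely positive map factoring $\psi$ through $\varphi$.

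The direction (1)$\Rightarrow$(2) has a genuine gap, and it is a structural one rather than a technicality: your argument never uses the hypothesis that $\psi$ is $\pi$-pure. You Stinespring-factor $\gamma=\psi\circ\varphi^{-1}$ to get some $\rho$ with $\rho\circ\varphi\preceq\psi$, and then try to feed $\rho$ directly into the Proposition to produce $\hat\pi$ with $\hat\pi\circ\varphi$ $\pi$-pure. But $\rho$ is a representation of the codomain of $\varphi$, while $\pi$ is a representation of the \emph{domain} $\mathcal{M}$; there is no subalgebra relation between $\pi(\mathcal{M})$ and the codomain of $\varphi$, so the Proposition (which needs $\mathcal{N}\subseteq\mathcal{M}$ with $\mathcal{M}$ generated by $\mathcal{N}$ and $\mathcal{N}'$, and a compression identity $E^*nE=\pi(n)$) does not apply in the way you sketch. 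Moreover, nothing in this step singles out $\pi$: if $\psi$ were $\pi'$-pure for some other representation $\pi'$, your construction would identically claim $\hat\pi\circ\varphi$ is $\pi'$-pure, which cannot hold simultaneously in general.

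The missing ingredient is the colligation from Theorem~\ref{concretecoll} (or Corollary~\ref{inconcrete}), which is what ties the Stinespring representations on the $\varphi$-side to those on the $\psi$-side. Since $\psi$ is $\pi$-pure, the $\psi$-side Stinespring representations are of the form $I\otimes\pi$; the colligation operator intertwines $\pi_1\oplus\pi_4$ with $\pi_2\oplus\pi_3$, so Observation~\ref{factoralong} lets you factor off the parts of $\pi_3,\pi_4$ totally orthogonal to $\pi$ (these decouple and contribute nothing to $\psi$). Only after that truncation is the Proposition in play, to realize the surviving part of the representation supporting $\hat\pi\circ\varphi^{\pm}$ in tensored form $I\otimes\pi$. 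Your sketch bypasses the colligation entirely and thus loses the only bridge to $\pi$.

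Incidentally, the statement of Corollary~\ref{commrep} as written leans on Corollary~\ref{inconcrete}, which assumes $\varphi$ Archimedian; your proof implicitly needs something similar (to justify passing from $\varphi\le\psi$ to a concrete colligation), and you should flag that hypothesis rather than letting it slide.
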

	The compatibility of representations is important in infinite dimensional noncommutative interpolation problems, where there 
	is some work to show the abstract technique here gives a {\it bona fide} solution (see, for example, the last section of \cite{pascoeinv}).

\section{Examples from interpolation theory}

\subsection{Nevalinna-Pick interpolation in a Lyapunov formulation} \label{techdemo}

The following example demonstrates how our method works in the classical case.

Given $\norm{X} \leq 1$, the Lyapunov map $L_X$ is invertible, and
\[
L_X\inv(H) = \sum_{n=0}^\infty {X\ad}^n H X^n.
\]
Define the operator 
\[
\Lambda_{XY} = L_Y \circ L_X\inv.
\]
Theorem \ref{lyaclassic} says that $\Lambda_{XY}$ must be completely positive for the corresponding interpolation problem to be solvable.

As an example, consider the choice of matrices
\[
X = \bbm z_1 & & \\ & \ddots & \\ & & z_n\ebm, \hspace{1cm} Y = \bbm \la_1 & & \\ & \ddots & \\ & & \la_n\ebm
\]
Computing the explicit form of $L_X\inv$, we get
\begin{align*}
L_X\inv(H) &= \sum_{n=0}^\infty {\bbm z_1 & & \\ & \ddots & \\ & & z_n\ebm\ad}^n (h_{ij})_{i,j} \bbm z_1 & & \\ & \ddots & \\ & & z_n\ebm^n\\
&= \left(\frac{h_{ij}}{1 - \cc{z}_i z_j}\right)_{i,j}
\end{align*}
Plugging in $Y$ to $L_Y$ gives
\[
L_Y(H) = H - \bbm \la_1 & & \\ & \ddots & \\ & & \la_n\ebm\ad H \bbm \la_1 & & \\ & \ddots & \\ & & \la_n\ebm,
\]
and so 
\beq\label{eq-hpick}
\Lambda_{XY}(H) = L_Y\circ L_X\inv(H) = \left( h_{ij} \frac{1 - \cc\la_i \la_j}{1 - \cc z_i z_j} \right)_{i,j}.
\eeq
In this case, the positivity condition becomes 
\[
H = (h_{ij}) \geq 0 \Rightarrow \left( h_{ij} \frac{1 - \cc\la_i \la_j}{1 - \cc z_i z_j} \right)_{i,j} \geq 0,
\]
which recovers the Pick condition
\[
\left(\frac{1 - \cc\la_i \la_j}{1 - \cc z_i z_j} \right)_{i,j} \geq 0.
\]
That is, we have recast classical Nevanlinna-Pick interpolation as a question about completely positive maps.

\subsubsection{Lurking isometries}

When $\Lambda_{XY}$ is a completely positive map, the Stinespring theorem allows us to write $\Lambda_{XY}$ by
\[
\Lambda_{XY}(H) = \Gamma\ad \pi(H) \Gamma
\]
where $\pi:\Mn \to B(\hilbert)$ is a representation.
\begin{note}
In this special case where $H \in \Mn$, we know the homomorphisms. There are no closed ideals. All representations of $\Mn$ are the same. So write
\[
\pi(H) = I\otimes H
\]
\end{note}

Since
\[
L_Y \circ L_X\inv(H) = \Lambda_{XY}(H),
\]
we can calculate
\begin{align*}
L_Y(H) &= \Lambda_{XY} \circ L_X (H) \\
H - Y\ad H Y &= \Gamma\ad \pi(H - X\ad H X) \Gamma \\
H + \Gamma\ad \pi(H) \Gamma &= Y\ad H Y + \Gamma\ad \pi(X\ad H X) \Gamma.
\end{align*}

After setting $H = W\ad V$ (and conjugation by $\alpha, \beta$), we get
\[
W\ad V + \Gamma\ad \pi(W\ad) \pi(V) \Gamma = Y\ad W\ad V Y + \Gamma\ad \pi(X)\ad \pi(W\ad)\pi(V) \pi(X) \Gamma.
\]

This is the setup for the so-called lurking isometry argument. That is, there exists $U = \bbm A & B \\ C& D\ebm$ so that 
\beq\label{lurk}
\bbm A & B \\ C & D \ebm \bbm V \\ \pi(V) \Gamma \ebm = \bbm VY \\ \pi(V)\pi(X) \Gamma \ebm,
\eeq
where $U$ is a partial isometry (that is, $(U\ad U)^2 = U\ad U $).

Furthermore,
\[
\bbm A & B \\ C & D \ebm \bbm V & \\ & \pi(V) \ebm = \bbm V & \\ & \pi(V) \ebm \bbm A & B \\ C & D \ebm.
\]
Note that A, B, C, D factor as $A = \hat{A}\otimes I,$ $B = \hat{B}\otimes I,$ and so on.

Now set $V = I$. Then \eqref{lurk} becomes
\[
 \bbm Y \\ \Gamma \ebm = \bbm A & B \\ C & D \ebm \bbm I  \\  \pi(X) \Gamma \ebm 
\]
leading to the equations
\begin{align*}
Y &= A + B \pi(X) \Gamma \\
\Gamma &= C + D \pi(X) \Gamma.
\end{align*}

Eliminating $\Gamma$ gives the (Schur-Agler) transfer function realization
\beq\label{eq-tfr}
Y = A + B \pi(X) (1 - D \pi(X))\inv C,
\eeq
which points to the existence of an interpolating  function in terms of complete positivity. (c.f. \cite{bv2005, bv2007, pascoeinv})

\subsection{Two variable commutative Nevanlinna-Pick interpolation}
Let $X_1, X_2$ be commuting matrices. Define
\[
\ph = L_X(H) = \bbm H & \\ & H \ebm - \bbm X_1\ad H X_1 & \\ & X_2\ad H X_2 \ebm
\]
and
\[
\psi = L_Y(H) = H - Y\ad H Y.
\]
This setup gives the 2-variable commutative Nevanlinna-Pick interpolation theorem.
Similarly, as in \cite{pascoeinv}, our method works on more general semi-algebraic sets in the general noncommutative case.


\subsection{Partial Nevanlinna-Pick interpolation}
Let
\[
\ph = H - X\ad H X
\]
and
\[
\psi = w\ad H w - v\ad H v.
\]

Notice that $\psi$ is scalar-valued, and so positivity of $\psi \circ \ph\inv$ implies complete positivity in this case. In the notation of this section, we have $\Psi_+ = w$ and $\Psi_- = v$, as well as $\Phi_+ = I$ and $\Phi_- = X$. By Theorem \ref{lyapunovtype} we get the transfer function formulation 
\begin{align*}
v &= [A + BX(I - DX)\inv C]w \\
&= f(X) w
\end{align*}
using the usual Schur-Agler representation $f(X) = A + BX(I - DX)\inv C$.

If we make the definitions
\[
X = \bbm z_1 & & \\ & \ddots & \\ & & z_n \ebm, w = \bbm 1 \\ \vdots \\ 1\ebm, v = \bbm \la_1 \\ \vdots \\ \la_n\ebm,
\]
the problem becomes to look at the existence of a function $f$ so that $f(z_i) = \la_i$.
The technique here generalizes to other settings of solving $f(X)v=w,$ including noncommutative Nevanlinna-Pick interpolation.

\subsection{Commutant coefficient interpolation}
Let $\mathcal{M}$ be a $C^*$-algebra.
Consider $L_X(H) = H - X^*HX$ as a map from $\mathcal{M}$ to itself for some $X \in \mathcal{M}.$
We see by Corollary \ref{commrep} that if $L_X \leq L_Y$ and $X$ is strictly contractive,
then
	$$Y = A+B(I \otimes X)(1-DX)^{-1}C=A + \sum BD^nC X^{n+1}.$$

\bibliography{references}
\bibliographystyle{plain}


\end{document}